\hfill \footnotesize {\rm Approximate Generalized Additive-Quadratic Functional Equations } \hfill
\hfill \footnotesize {\rm S. Jahedi$^*$ and V. Keshavarz } \hfill$~$}
\begin{document}
\thispagestyle{empty}
 \setcounter{page}{1}

\begin{center}
{\large\bf  Approximate Generalized Additive-Quadratic Functional Equations on Ternary Banach Algebras} \vskip.30in

{\bf  Sedigheh Jahedi$^{1,*}$ and Vahid Keshavarz$^1$} \\[2mm]
{\footnotesize  $^1$Department of Mathematics, Shiraz University of Technology\\
P. O. Box 71557-13876, Shiraz, Iran}\\
{\footnotesize   jahedi@sutech.ac.ir and v.keshvarz68@yahoo.com}\\
\end{center}\vskip 2mm

 \noindent{\footnotesize{\bf Abstract.}
In this paper, we introduce the concept of j-hom-derivation, $j\in\{1,2\}$ and solve the new generalized additive-quadratic functional equations in the sense of ternary Banach algebras. Moreover, using the fixed point method, we prove its Hyers-Ulam stability.\\
\vskip .2in
{\footnotesize \bf Keywords:}{ Hyers-Ulam stability, ternary Banach algebra, additive function, quadratic function, fixed point theorem.}\\
{\footnotesize \bf  Mathematics Subject Classification (2010):}  {17A40, 39B52, 17B40, 47B47.}
\footnotetext{*Corresponding author}
\baselineskip=16pt

\theoremstyle{definition}
  \newtheorem{df}{Definition}[section]
    \newtheorem{rk}[df]{Remark}
\theoremstyle{plain}
  \newtheorem{prop}[df]{Proposition}
  \newtheorem{ex}[df]{Example}
  \newtheorem{thm}[df]{Theorem}
  \newtheorem{cor}[df]{Corollary}
 \newtheorem{lem}[df]{Lemma}
 \setcounter{section}{0}
 \numberwithin{equation}{section}
\vskip .1in
\section {\bf{Introduction and preliminaries }}
A ternary Banach algebra $X$ with $\|.\|$ is a complex Banach algebra equipped with a ternary product  $(xyz)\rightarrow xyz $ of $X^{3}$ into $X$. This product is $\mathbb{C}$-linear in the outer variable, conjugate $\mathbb{C}-$linear in the middle variable associative in the sense that
$ xy(zwv)= x(zyw)v=(xyz)wv$ and satisfies $\|xyz\|\leq \|x\|.\|y\|.\|z\|,~\|xxx\|=\|x\|^{3}$ (see \cite{H.Z}). Ternary structures and their extensions, known as n-ary algebras have many applications in mathematical physics and photonics, such as the quark model and Nambu mechanics \cite{k, n}. Today, many physical systems can be modeled as a linear system. The principle of additivity has various applications in physics especially in calculating the internal energy in thermodynamic and also the meaning of the superposition principle. 
   
The Hyers-Ulam stability problem which arises from Ulam's question says that for two given fixed functions $\varphi$ and $\psi,$ the functional equation $\mathcal{E}_1(F)=\mathcal{E}_2(F)$ is stable if for a function $f$ for which $d(\mathcal{E}_1(f),\mathcal{E}_2(f))\leq\varphi$ holds, there is a function $g$ such that $\mathcal{E}_1(g)=\mathcal{E}_2(g)$ and $d(f,g)\leq\psi.$ \cite{g, h, r, u}. In 1941 \cite{h}, Hyers solved the approximately additive mappings on the setting of Banach spaces.

The functional equation $f(x+y)=f(x)+f(y)$ is an additive equation and its solution is called an additive mapping.

\begin{thm}\cite{h}
 Let X and Y be Banach spaces. Assume that $f:X\to Y$ satisfies
 $$\|f(x+y)-f(x)-f(y)\|\leq\varepsilon$$
 for all $x,y,z\in X$ and some $\varepsilon\geq0.$ Then there exists a unique additive mapping $T:X\to Y$ such that
 $$\|f(x)-T(x)\|\leq\varepsilon$$
 for all $x\in X.$
\end{thm}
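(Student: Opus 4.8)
The plan is to use Hyers' original direct (iteration) method, constructing the additive map as the limit of the rescaled iterates $f(2^{n}x)/2^{n}$. First I would specialize the hypothesis by setting $y=x$, which gives $\|f(2x)-2f(x)\|\le\varepsilon$, equivalently $\|f(2x)/2-f(x)\|\le\varepsilon/2$. This single doubling inequality is the engine of the entire argument, so I would isolate it at the outset.

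Next I would set $g_{n}(x):=f(2^{n}x)/2^{n}$ and estimate consecutive differences. Factoring out $2^{-n}$ and applying the doubling inequality at the point $2^{n}x$ yields $\|g_{n+1}(x)-g_{n}(x)\|\le\varepsilon/2^{n+1}$. A geometric-series bound then shows that $\{g_{n}(x)\}$ is Cauchy for each fixed $x$, and since $Y$ is complete the pointwise limit $T(x):=\lim_{n}g_{n}(x)$ exists. Telescoping from $n=0$ and letting the number of terms tend to infinity produces $\|f(x)-T(x)\|\le\varepsilon\sum_{k=0}^{\infty}2^{-(k+1)}=\varepsilon$, which is exactly the asserted estimate; keeping track of this constant is where I would be most careful, since it is what makes the bound sharp.

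To verify additivity I would apply the original inequality at the scaled points $2^{n}x$ and $2^{n}y$ and divide by $2^{n}$, obtaining $\|g_{n}(x+y)-g_{n}(x)-g_{n}(y)\|\le\varepsilon/2^{n}\to 0$; passing to the limit gives $T(x+y)=T(x)+T(y)$. For uniqueness, suppose $T'$ is another additive map satisfying the same bound. Additivity forces $T'(x)=T'(2^{n}x)/2^{n}$ and likewise for $T$, so $\|T(x)-T'(x)\|\le 2^{-n}\bigl(\|T(2^{n}x)-f(2^{n}x)\|+\|f(2^{n}x)-T'(2^{n}x)\|\bigr)\le 2\varepsilon/2^{n}\to 0$, whence $T=T'$.

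The only genuinely delicate point is the completeness-based convergence: one must confirm that the Cauchy estimate passes correctly to the norm of the limit and that the additivity defect $\varepsilon/2^{n}$ vanishes in the limit. Everything else is routine geometric bookkeeping, so I expect the proof to be short and entirely elementary.
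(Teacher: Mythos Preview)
Your argument is correct and is precisely Hyers' original direct-iteration proof: the doubling inequality, the Cauchy estimate for $f(2^{n}x)/2^{n}$ via a geometric series, the telescoping bound yielding $\|f(x)-T(x)\|\le\varepsilon$, additivity by dividing the hypothesis at scaled points by $2^{n}$, and uniqueness via the standard $2\varepsilon/2^{n}$ trick. Note, however, that the paper does not supply its own proof of this theorem; it is quoted from \cite{h} as background and left unproved, so there is no in-paper argument to compare against---your proposal simply reproduces the classical proof that the citation points to.
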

 First, T. Rassias \cite{r} and Aoki \cite{a} and then a number of authors extended this result by considering the unbounded Cauchy differences in different spaces. For example see \cite{ 22, 20, 1,2,3, jk, kj1, 21}. F. Skof in 1983 \cite{skof}, proved the stability problem of quadratic functional equation between normed and Banach spaces. The functional equation $f(x+y)+f(x-y)=2f(x)+2f(y)$ is called quadratic equation. Cholewa \cite{ch} showed that the Skof's theorem is also true for the mappings defined on abelian groups. Later, a lot of research appeared with various generalization of quadratic functional equations \cite{c, 3.5, 4.5}.\\
Consider the generalized additive-quadratic functional equation
\begin{equation}\label{v1}
\begin{split}
3^jf(\frac{x+y+z}{3})&+f(x)+f(y)+(-1)^jf(z)-2^jf(\frac{x+y}{2})-2^jf(\frac{y+z}{2})-(-1)^j2^jf(\frac{x+z}{2})\\
&=\rho[jf(x+y+z)+jf(x)-f(x+y)-f(x+z)-(j-1)f(y+z)]
\end{split}
\end{equation}
where $\rho\neq0,\pm1$ is a complex number and $j\in\{1,2\}$. In this paper,  we solve \eqref{v1} and show that for $j=1,$ a function which satisfies \eqref{v1} is additive and for $j=2,$ it is quadratic. We also prove its Hyers-Ulam stability by using the fixed point method. To do this, we use the Diaz-Margolis fixed point theorem \cite{D}.


\begin{thm}\label{t1}\cite{D}
Let $(X, d)$ be a complete generalized metric space and let $F: X \rightarrow X$ be a strictly contractive mapping
with Lipschitz constant $0<L<1$. Then for each given element $x\in X$, either
$$d(F^n (x), F^{n+1} (x)) = \infty$$
for all nonnegative integers $n$ or there exists a positive integer $n_0$ such that

$(1)$ $d(F^n (x), F^{n+1}(x)) <\infty, \qquad \forall n\ge n_0$;

$(2)$ the sequence $\{F^n (x)\}$ converges to a unique fixed point $y^*$ of $F$ in the set $Y = \{y\in X \mid d(F^{n_0} x, y) <\infty\}$;

$(4)$ $d(y, y^*) \le \frac{1}{1-L} d(y, F(y))$ for all $y \in Y$.
\end{thm}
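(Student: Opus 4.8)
The plan is to track the single scalar sequence $a_n = d(F^n(x), F^{n+1}(x))$ and exploit the fact that in a generalized metric space the contraction inequality $d(F(u),F(v)) \le L\, d(u,v)$ remains meaningful even when the right-hand side equals $+\infty$. Applying it to $u = F^n(x)$, $v = F^{n+1}(x)$ gives $a_{n+1} \le L\, a_n$, so $(a_n)$ is non-increasing in the extended sense: once a term is finite, every later term is finite and decays geometrically. This immediately yields the stated dichotomy: either $a_n = +\infty$ for every $n$, or there is a least index $n_0$ with $a_{n_0} < \infty$, and then $a_n \le L^{n-n_0} a_{n_0}$ for all $n \ge n_0$, which is part $(1)$.

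Second, I would show $\{F^n(x)\}_{n\ge n_0}$ is Cauchy. For $m > n \ge n_0$ the triangle inequality together with the geometric bound gives $d(F^n(x),F^m(x)) \le \sum_{k=n}^{m-1} a_k \le \frac{L^{n-n_0}}{1-L}\, a_{n_0}$, which tends to $0$ as $n \to \infty$; since all these distances are finite, completeness furnishes a limit $y^*$ with $d(F^n(x), y^*)\to 0$. Contractivity makes $F$ continuous on finite-distance components, so passing to the limit in $F(F^n(x)) = F^{n+1}(x)$ yields $F(y^*) = y^*$; equivalently one bounds $d(F(y^*),y^*) \le L\,d(y^*,F^n(x)) + d(F^{n+1}(x),y^*) \to 0$. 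Taking $n = n_0$ in the Cauchy estimate and letting $m\to\infty$ also shows $d(F^{n_0}(x),y^*) \le \frac{1}{1-L}a_{n_0} < \infty$, so $y^* \in Y$. Uniqueness in $Y$ then follows because any two fixed points $y_1^*,y_2^* \in Y$ satisfy $d(y_1^*,y_2^*) < \infty$ (routed through $F^{n_0}(x)$) and $d(y_1^*,y_2^*) = d(F(y_1^*),F(y_2^*)) \le L\,d(y_1^*,y_2^*)$, which forces $d(y_1^*,y_2^*)=0$; this settles part $(2)$.

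Finally, for the estimate in part $(4)$, given $y \in Y$ I would split $d(y,y^*) \le d(y,F(y)) + d(F(y),y^*)$ and rewrite the last term as $d(F(y),F(y^*)) \le L\,d(y,y^*)$; since $y, y^* \in Y$ force $d(y,y^*) < \infty$, subtracting gives $(1-L)\,d(y,y^*) \le d(y,F(y))$, i.e.\ the claimed bound. The main obstacle throughout is purely the bookkeeping around the extended-valued metric: one must never divide by, nor cancel, an infinite quantity, so every cancellation step has to be preceded by a separate verification that the relevant distance is finite — which is precisely why the index $n_0$ and the set $Y$ are built into the statement. Once that discipline is maintained, the argument reduces to the classical geometric-series contraction estimate.
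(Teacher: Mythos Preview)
Your argument is the standard Diaz--Margolis proof and is correct as written: the dichotomy via the monotone extended-real sequence $a_n$, the geometric Cauchy estimate, the passage to the limit for the fixed-point property, uniqueness within $Y$, and the a~posteriori bound are all handled properly, including the care about finiteness before cancellation.

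However, there is nothing to compare against: in the paper this theorem is merely \emph{cited} from \cite{D} as a preliminary tool and no proof is given. So your proposal is not an alternative to the paper's proof but rather a self-contained reconstruction of the original Diaz--Margolis argument, which the paper simply takes for granted.
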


\section{Main Results}


Extending the concepts of m-homomorphism, $1\leq m\leq4,$ and hom-derivation which has been introduced by Eshaghi Gordji et. al \cite{e} and park et. al \cite{phom}, respectively, we have the following definitions in the case of ternary Banach algebras. Throughout the paper, X is a ternary Banach algebra.

 We say that a functional  $f$ is a j-mapping, $j\in\{1,2\}$ if for $j=1$ it is an additive and for $j=2,$ f is a quadratic mapping.
\begin{df}\label{1}
A mapping $f:X\to X$ is a ternary j-homomorphism, $j\in\{1,2\},$ if $f$ is a j-mapping and
$$f(xyz)=f(x)f(y)f(z)$$
for all $x,y,z\in X.$
\end{df}
\begin{df}\label{2}
Let $h:X\to X$ be a ternary j-homomorphism, $j\in \{1,2\}.$ A mapping $D:X\to X$ is called a ternary j-hom-derivation if $D$ is a j-mapping and satisfies
$$D(xyz)=D(x)(h(y))^j(h(z))^j+ (h(x))^jD(y)(h(z))^j+(h(x))^j(h(y))^jD(z)$$
for all $x,y,z\in X$.\\

For $j\in\{1,2\}$ and a given mapping $f:X\to X$ consider the equation
\end{df}
\begin{equation}\label{v2}
\begin{split}
\mathcal{E}^jf(x,y,z)=3^jf(\frac{x+y+z}{3})&+f(x)+f(y)+(-1)^jf(z)-2^jf(\frac{x+y}{2})-2^jf(\frac{y+z}{2})-(-1)^j2^jf(\frac{x+z}{2})\\
&-\rho[jf(x+y+z)+jf(x)-f(x+y)-f(x+z)-(j-1)f(y+z)].
\end{split}
\end{equation}

In the following we give the solution of the functional equation \eqref{v1}.
\begin{prop}\label{p1}
Let $f:X\to X$ be a mapping satisfies  $\mathcal{E}^j(f(x,y,z))=0$, $j\in \{1,2\}.$ Then $f(0)=0$ and\\
$(i)~$ for $j=1$, f is an odd and additive mapping;\\
$(ii)~$ for $j=2$, f is a quadratic mapping if f is an even function.
\end{prop}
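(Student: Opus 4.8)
The plan is to extract every conclusion from the single hypothesis $\mathcal{E}^j f(x,y,z)=0$ by specializing the three free variables in \eqref{v2} and then forming linear combinations of the resulting identities; since no structure beyond the functional equation is available, the whole argument is a sequence of well-chosen substitutions. I would begin with $f(0)=0$. Putting $x=y=z=0$ collapses every term to a scalar multiple of $f(0)$, and the two values of $j$ must be handled separately because that scalar genuinely depends on $j$: for $j=1$ the entire $\rho$-bracket vanishes (its last coefficient is $(j-1)=0$ and the rest cancel in pairs), leaving $2f(0)=0$; for $j=2$ the non-$\rho$ terms cancel and one is left with $-\rho\,f(0)=0$, whence $f(0)=0$ because $\rho\neq 0$.

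For part (i) I would prove oddness first and additivity second. To obtain $f(-x)=-f(x)$, I would evaluate \eqref{v2} at a reflected pair of arguments — for instance at $(x,-x,0)$, together with $(0,0,x)$ and $(x,0,0)$ to record the scaling relations that the equation forces on $f(x/2)$ and $f(x/3)$ — and add each instance to its sign-reversed companion so that the Jensen-type half-argument terms $f(\tfrac{\cdot+\cdot}{2})$ and the $\rho$-bracket cancel, isolating $f(x)+f(-x)=0$. With oddness in hand I would then select substitutions that annihilate the remaining mixed terms and reduce \eqref{v2} to a Jensen identity $2f(\tfrac{x+y}{2})=f(x)+f(y)$; combined with $f(0)=0$ (Jensen at $y=0$ yields the homogeneity $2f(x/2)=f(x)$) this upgrades to the Cauchy equation $f(x+y)=f(x)+f(y)$, so $f$ is additive.

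For part (ii) I would assume $f$ is even and aim directly for the quadratic identity $f(x+y)+f(x-y)=2f(x)+2f(y)$. Here $(-1)^j=+1$ and $(j-1)=1$, so both the $f(z)$ term and the $f(y+z)$ term of the $\rho$-bracket are present. I would evaluate \eqref{v2} at a pair such as $(x,y,0)$ and $(x,-y,0)$, use evenness to replace $f(-y)$ by $f(y)$ and the induced scaling of $f(x/2)$ and $f(x/3)$, and combine the two instances so that the $\rho$-bracket and the three-variable dilated term $f(\tfrac{x+y+z}{3})$ collapse, leaving exactly the quadratic functional equation.

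The step I expect to be the main obstacle is the bookkeeping in these combinations. The equation simultaneously mixes ordinary values $f(x)$, the dilated values $f(\tfrac{x+y+z}{3})$ and $f(\tfrac{\cdot+\cdot}{2})$, and the translated values $f(x+y)$ sitting inside the $\rho$-bracket, so one must first pin down the scaling behaviour forced by \eqref{v2} and then use precisely those scaling relations to clear denominators before the half-argument terms will cancel. Identifying the linear combination of substituted instances that kills the $\rho$-bracket and the dilated Jensen pieces at the same time is the delicate part — and this is exactly where the hypothesis $\rho\neq 0,\pm1$ is used, to ensure the bracket is non-degenerate and cannot be absorbed trivially. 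Once the correct combination is found, the additive and quadratic identities drop out at once.
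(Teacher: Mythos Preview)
Your treatment of $f(0)=0$ is correct and in fact more detailed than the paper's (the paper simply asserts that $\mathcal{E}^jf(0,0,0)=0$ forces $f(0)=0$). The rest of your plan is defensible in spirit but more circuitous than the paper's argument, and in places not fully worked out.

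For $j=1$ the paper never bothers proving oddness first. It simply keeps $x=0$ throughout: from $(0,0,z)$ one gets $3f(z/3)=f(z)$; from $(0,y,y)$ one gets $(1-\rho)\bigl(f(2y)-2f(y)\bigr)=0$, so $\rho\neq 1$ gives $f(2y)=2f(y)$; and then from $(0,y,z)$, after inserting these two scalings, only $\rho\bigl(f(y+z)-f(y)-f(z)\bigr)=0$ survives, so $\rho\neq 0$ gives additivity. Oddness is then a one-line corollary (put $z=-y$). Your route---oddness first via reflected pairs, then a Jensen identity, then Cauchy---has a circularity risk: you plan to extract $2f(x/2)=f(x)$ from Jensen at $y=0$, but Jensen is exactly what you are still trying to prove, and the intermediate identity you would obtain from symmetrising $(x,y,0)$ and $(y,x,0)$ still mixes $f(x+y)$ with $f\bigl(\tfrac{x+y}{2}\bigr)$ until the half-scaling is established independently. (Your oddness step itself is fine: adding the instances at $(x,-x,0)$ and $(-x,x,0)$ really does give $f(x)+f(-x)=0$.)

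For $j=2$ the paper's decisive substitution is $z=-x$ (not $z=0$ as you suggest): together with evenness, $\mathcal{E}^2f(x,y,-x)=0$ collapses to a two-variable identity. Setting $y=0$ in it (and replacing $x$ by $2x$) gives $f(2x)=4f(x)$; setting $x=0$ gives $9f(y/3)=f(y)$; feeding both scalings back yields $f(x+y)+f(x-y)=2f(x)+2f(y)$. Your plan to combine $(x,y,0)$ and $(x,-y,0)$ leaves unexplained where the scalings $4f(x/2)=f(x)$ and $9f(x/3)=f(x)$ would come from---you assert they are ``induced'' but give no substitution that produces them.

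In short, your bookkeeping worry is real for your chosen route but evaporates under the paper's: pick substitutions ($x=0$ for $j=1$; $z=-x$ for $j=2$) that annihilate the $\rho$-bracket or reduce it to a single Cauchy/quadratic difference, rather than trying to cancel it by linear combinations.
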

\begin{proof}
First of all, note that $\mathcal{E}^jf(0,0,0)=0$  implies that $f(0)=0.$\\
$(i)~$ Suppose $j=1$. Let $x=0$ and $y=0$ in \eqref{v2}.Then
\begin{equation}\label{v3}
3f(\frac{z}{3})=f(z)
\end{equation}
Put $x=0$ and replacing $z$ by $y$ in \eqref{v2} and using \eqref{v3}, we obtain $f(2y)-2f(y)=\rho[f(2y)-2f(y)].$ Since $\rho\neq0,\pm1,$ then
\begin{equation}\label{v4}
  f(2y)=2f(y).
\end{equation}
Again put $x=0$ in \eqref{v2}, use \eqref{v3} and \eqref{v4}  we get $\rho[f(y+z)-f(y)-f(z)]=0.$ So
\begin{equation*}
f(y+z)=f(y)+f(z)
\end{equation*}
i.e., $f$ is additive. To show that $f$ is an odd function put $z=-y$ in the above equation, so $f(-y)=-f(y)$.\\
$(ii)~$ Suppose $j=2$ and $f$ is an even function satisfies $\mathcal{E}^2f(x,y,z)=0.$ So from $\mathcal{E}^2f(x,y,-x)=0$ we have 
 \begin{align}\label{v6}
9f(\frac{y}{3})+f(y)+2f(x)-4f(\frac{x+y}{2})-4f(\frac{y-x}{2})=\rho[2f(y)+2f(x)-f(x+y)-f(y-x)].
\end{align}
Letting  $y=0$ and $x=2x$ in \eqref{v6}, then
\begin{equation}\label{v7}
f(2x)=4f(x).
\end{equation}
Again in \eqref{v6} put $x=0$ and use \eqref{v7}. So
\begin{equation}\label{v8}
9f(\frac{y}{3})=f(y).
\end{equation}
Finally, by applying \eqref{v7} and \eqref{v8} in $\mathcal{E}^2f(x,y,z)=0,$
$$f(x+y)+f(x-y)=2f(x)+2f(y).$$
This completes the proof.
\end{proof}
In the sequel, assume $j\in \{1,2\}$ and $\delta,\sigma: X^3\to [0, \infty)$ are two functions satisfy conditions
\begin{equation}\label{v9}
\delta(\frac{x}{2},\frac{y}{2},\frac{z}{2})\leq\frac{k}{2^{j}}\delta(x,y,z),
\end{equation}
\begin{equation}\label{v99}
\sigma(\frac{x}{2},\frac{y}{2},\frac{z}{2})\leq\frac{k}{2^{3j}}\sigma(x,y,z)
\end{equation}
for all $x,y,z\in X$ and some $0<k<1.$ Clearly, $\delta(0,0,0)=\sigma(0,0,0)=0$ and  by induction one can obtain that
\begin{equation*}
2^{nj}\delta(\frac{x}{2},\frac{y}{2},\frac{z}{2})\leq k^n\delta(x,y,z),
\end{equation*}
\begin{equation*}
{2^{3nj}}\sigma(\frac{x}{2},\frac{y}{2},\frac{z}{2})\leq k^n\sigma(x,y,z)
\end{equation*}
for all $n\in \mathbb{N}.$ If $f:X\to X$ is a function such that $\|\mathcal{E}^jf(x,y,z)\|\leq\delta(x,y,z)$
then we have $f(0)=0.$\\

To prove the following results, we need consider two cases. For $j=1$, suppose that $f_j$ and $g_j$ are odd and in case $j=2$ assume $f_j$ and $g_j$ are even functions.
\begin{thm}\label{t2}
Let $j\in\{1,2\}$ and $f_j:X\to X$ be a function satisfies
\begin{equation}\label{v11}
\|\mathcal{E}^jf_j(x,y,z)\|\leq\delta(x,y,z)
\end{equation}
where $\delta:X^3\to [0,\infty)$ fulfill \eqref{v9}. Then there exists a unique ternary j-mapping $h_j:X\to X$ such that
\begin{equation}\label{v111}
\|f_1(x)-h_1(x)\|\leq\frac{k}{2(1-k)}\delta(0,x,-x),
\end{equation}
\begin{equation}\label{v1111}
\|f_2(x)-h_2(x)\|\leq\frac{k}{2(1-k)}\delta(x,0,-x)
\end{equation}
\end{thm}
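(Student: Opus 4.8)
The plan is to run the Diaz--Margolis scheme of Theorem~\ref{t1} on a weighted function space, handling the odd case $j=1$ and the even case $j=2$ simultaneously. Write $\phi_1(x)=\delta(0,x,-x)$ and $\phi_2(x)=\delta(x,0,-x)$, and introduce
\[
\Omega_j=\{\,g:X\to X \mid g(0)=0,\ g\text{ odd if }j=1,\ g\text{ even if }j=2\,\},
\]
equipped with the generalized metric
\[
d(g,h)=\inf\{\mu\in[0,\infty]:\|g(x)-h(x)\|\le\mu\,\phi_j(x)\ \text{for all }x\in X\}.
\]
It is by now standard that $(\Omega_j,d)$ is a complete generalized metric space, so that Theorem~\ref{t1} applies.

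The contraction is the halving operator $(Jg)(x)=2^{j}g(x/2)$, which clearly preserves the condition $g(0)=0$ and the relevant parity, hence maps $\Omega_j$ into itself. If $d(g,h)\le\mu$, then using the scaling hypothesis \eqref{v9} at the points defining $\phi_j$,
\[
\|(Jg)(x)-(Jh)(x)\|=2^{j}\|g(x/2)-h(x/2)\|\le2^{j}\mu\,\phi_j(x/2)\le 2^{j}\mu\cdot\frac{k}{2^{j}}\phi_j(x)=k\mu\,\phi_j(x),
\]
so that $d(Jg,Jh)\le k\,d(g,h)$ and $J$ is strictly contractive with Lipschitz constant $L=k<1$.

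Next I would extract the starting inequality $d(f_j,Jf_j)<\infty$ from one well-chosen substitution into \eqref{v2} subject to \eqref{v11}. For $j=1$, evaluating $\mathcal{E}^{1}f_1$ at $(0,x,-x)$ and invoking $f_1(0)=0$ together with oddness collapses the left-hand side to $2f_1(x)-4f_1(x/2)$ while the $\rho$-bracket cancels identically, whence $\|f_1(x)-2f_1(x/2)\|\le\frac12\phi_1(x)$; for $j=2$, the even substitution $(x,0,-x)$ reduces $\mathcal{E}^{2}f_2$ to $2f_2(x)-8f_2(x/2)$ with the bracket again vanishing, giving $\|f_2(x)-4f_2(x/2)\|\le\frac12\phi_2(x)$. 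In each case this says precisely that $d(f_j,Jf_j)<\infty$, so Theorem~\ref{t1} yields a unique fixed point $h_j=\lim_{n}J^{n}f_j$ in $\Omega_j$, and part~(4) of that theorem converts the starting bound, via summation of the resulting geometric series, into the estimates \eqref{v111} and \eqref{v1111}.

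It remains to confirm that $h_j$ is genuinely a $j$-mapping, i.e.\ that it meets the hypothesis of Proposition~\ref{p1}. Since $\mathcal{E}^{j}$ is linear in its argument and commutes with the scaling, one has $\mathcal{E}^{j}(J^{n}f_j)(x,y,z)=2^{nj}\mathcal{E}^{j}f_j(x/2^{n},y/2^{n},z/2^{n})$, and the induction consequence of \eqref{v9} gives $\|\mathcal{E}^{j}(J^{n}f_j)(x,y,z)\|\le k^{n}\delta(x,y,z)\to0$; passing to the pointwise limit $h_j$ forces $\mathcal{E}^{j}h_j(x,y,z)=0$. By Proposition~\ref{p1}, $h_1$ is then additive and $h_2$, being even, is quadratic, so each $h_j$ is the asserted $j$-mapping, uniqueness being inherited from the fixed point theorem. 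I expect the one delicate point to be the substitution step: the arguments $(0,x,-x)$ and $(x,0,-x)$ must be chosen exactly so that the half-argument contribution reduces to a single multiple of $f_j(x/2)$ and the $\rho$-bracket disappears, and it is this choice that pins down the specific weights $\delta(0,x,-x)$ and $\delta(x,0,-x)$ recorded in \eqref{v111}--\eqref{v1111}; the interchange of limit and $\mathcal{E}^{j}$ needed at the end is routine once the decay $2^{nj}\delta(\cdot/2^{n})\le k^{n}\delta(\cdot)$ is in hand.
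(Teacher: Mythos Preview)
Your approach is essentially identical to the paper's: both set up the Diaz--Margolis scheme on the space of maps with $g(0)=0$, use the same halving contraction $g\mapsto 2^{j}g(\cdot/2)$ with Lipschitz constant $k$ coming from \eqref{v9}, obtain the starting bound $d(f_j,Jf_j)\le\tfrac12$ from the very same substitutions $(0,x,-x)$ for $j=1$ and $(x,0,-x)$ for $j=2$, and then verify $\mathcal{E}^{j}h_j=0$ by the $k^{n}$ decay before invoking Proposition~\ref{p1}. The only cosmetic difference is that you build the parity of $g$ into the space $\Omega_j$ from the outset, whereas the paper imposes it on $f_j$ as a standing hypothesis and lets $h_j$ inherit it in the limit.
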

\begin{proof}
Let $\mathfrak{A}$ be the set of all functions $g:X\to X$ with $g(0)=0$. Define the mapping $Q_j:\mathfrak{A}\to\mathfrak{A}$ by $Q_j(g)=2^{j}g(\frac{x}{2})$  and for every  $ g, h\in \mathfrak{A}$ and $x\in X$ define
$$d_1(g,h)=\inf\{\alpha>0~:~\|g(x)-h(x)\|\leq \alpha\delta(0,x,-x)\},$$
$$d_2(g,h)=\inf\{\alpha>0~:~\|g(x)-h(x)\|\leq \alpha\delta(x,0,-x)\}$$
where $\inf\emptyset=+\infty.$ It is easy to show  that for each $j\in\{1,2\}$,  $d_j$ is a generalized metric on $\mathfrak{A}$ and  $(\mathfrak{A},d_j)$ is a complete generalized metric space.
Let $g,h\in \mathfrak{A}.$  Then
\begin{align*}
  \|Q_1(g)(x)-Q_1(h)(x)\|\leq 2d_{1}(g,h)\frac{k}{2}\delta(0,x,-x),
\end{align*}
\begin{align*}
  \|Q_2(g)(x)-Q_2(h)(x)\|\leq 2^2d_{2}(g,h)\frac{k}{2^2}\delta(x,0,-x)
\end{align*}
Then $d_{j}\Big(Q_j(g),Q_j(h)\Big)\leq k d_{j}(g,h),$ i.e. $Q_j$ is a contraction mapping. In case $j=1,$ put $x=0$  and $z=-y$ in \eqref{v11},  then
$$\|f_1(y)-2f_1(\frac{y}{2})\|\leq\frac{1}{2}\delta(0,y,-y).$$
In case $j=2,$ by setting $y=0$ and $z=-x$ in \eqref{v11} we have
$$\|f_2(x)-2^2f_2(\frac{x}{2})\|\leq\frac{1}{2}\delta(x,0,-x).$$
Above relations imply that $d_j(f_j,Q_j(f_j))\leq\frac{1}{2}.$ Hence by Theorem \ref{t1}, there exist a positive integer $n_0$ and a unique fixed point $h_j$ of $Q_j$ in set $\Omega=\{g\in\mathfrak{A}:~d(Q_j^{n_0}(f),g)<\infty\}$ and $\lim_{n\to\infty}Q_j^n(f_j)(x)=h_j(x),$ . SO,  for all $x\in X,$ $Q_j(h_j)(x)=h_j(x)$ and $\lim_{n\to\infty}{2^{jn}}f(\frac{x}{2^n})=h_j(x).$
Also we have $d_{j}(f_j,h_j)\leq\frac{k}{2(1-k)}.$ This implies that
$$\|f_1(x)-h_1(x)\|\leq\frac{k}{2(1-k)}\delta(0,x,-x),$$
$$\|f_1(x)-h_1(x)\|\leq\frac{k}{2(1-k)}\delta(x,0,-x).$$
But for each $j\in \{1,2\},$
\begin{align*}
\|\mathcal{E}^jh_{j}(x,y,z)\|&=\lim_{n\to\infty}{2^{jn}}\|\mathcal{E}^jf(\frac{x}{2^{n}},\frac{y}{2^{n}},\frac{z}{2^{n}})\|\\
&\leq\lim_{n\to\infty}{2^{jn}}\delta(\frac{x}{2^{n}},\frac{y}{2^{n}},\frac{z}{2^{n}})\\
&\leq\lim_{n\to\infty}k^n\delta(x,y,z)\\
&=0.
\end{align*}
Hence by Proposition \ref{p1}, for each $j\in\{1,2\},h_j$ is a j-mapping and the proof is complete.
\end{proof}
Now, we are going to prove Hyers-Ulam stability of ternary j-hom-derivations in ternary Banach algebras corresponding to the functional equation \eqref{v1}, by using the fixed point method.
\begin{thm}\label{t3}
Assume $\delta,\sigma:X^3\to[0,\infty)$ are two functions which satisfy conditions \eqref{v9} and \eqref{v99} for some constant $k\in (0,1).$ Suppose that $f_j$ and $g_j$ fulfill the following relations
\begin{equation}\label{v13}
\|\mathcal{E}^jf(x,y,z)\|\leq\delta(x,y,z),
\end{equation}
\begin{equation}\label{v14}
\|\mathcal{E}^jg(x,y,z)\|\leq\delta(x,y,z),
\end{equation}
\begin{equation}\label{v15}
\|f_j(xyz)-f_j(x)f_j(y)f_j(z)\|\leq\sigma(x,y,z),
\end{equation}
\begin{equation}\label{v16}
\|g_j(xyz)-g_j(x)(f_j(y))^j(f_j(z))^j-(f_j(x))^jg_j(y)(f_j(z))^j-(f_j(x))^j(f_j(y))^jg_j(z)\|\leq\sigma(x,y,z).
\end{equation}
for all $x,y,z\in X.$ Then there exist a unique ternary j-homomorphism $H_j:X\to X$ and a unique ternary j-hom-derivation $D_j:X\to X$ such that
\begin{equation}\label{v17}
\|f_1(x)-H_1(x)\|\leq\frac{k}{2(1-k)}~\delta(0,x,-x),
\end{equation}
\begin{equation}\label{v177}
\|f_2(x)-H_2(x)\|\leq\frac{k}{2(1-k)}~\delta(x,0,-x),
\end{equation}
\begin{equation}\label{v18}
\|g_1(x)-D_1(x)\|\leq\frac{k}{2(1-k)}~\delta(0,x,-x),
\end{equation}
\begin{equation}\label{v188}
\|g_2(x)-D_2(x)\|\leq\frac{k}{2(1-k)}~\delta(x,0,-x)
\end{equation}
for all $x\in X$ and some  $0<k<1.$
\end{thm}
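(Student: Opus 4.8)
The plan is to obtain $H_j$ and $D_j$ directly from Theorem~\ref{t2} and then to promote the j-mapping property to the full multiplicative and derivation identities by a normalized limiting argument. Since $f_j$ satisfies \eqref{v13}, which is exactly hypothesis \eqref{v11}, Theorem~\ref{t2} furnishes a unique ternary j-mapping $H_j$ with $\lim_{n\to\infty}2^{jn}f_j(x/2^{n})=H_j(x)$ together with the estimates \eqref{v17}--\eqref{v177}. Because $g_j$ satisfies \eqref{v14} with the same control function $\delta$, a second application of Theorem~\ref{t2} to $g_j$ produces a unique ternary j-mapping $D_j$ with $\lim_{n\to\infty}2^{jn}g_j(x/2^{n})=D_j(x)$ and the estimates \eqref{v18}--\eqref{v188}. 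Uniqueness of $H_j$ and $D_j$ is inherited from the fixed point clause of Theorem~\ref{t1}, so the remaining task is purely algebraic: to identify $H_j$ as a ternary j-homomorphism and $D_j$ as a ternary j-hom-derivation.

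For the homomorphism identity I would replace $(x,y,z)$ by $(x/2^{n},y/2^{n},z/2^{n})$ in \eqref{v15}. Since the ternary product is multilinear and conjugate-linear only in the middle slot while the scalars $2^{jn}$ are real, one has $\tfrac{x}{2^{n}}\tfrac{y}{2^{n}}\tfrac{z}{2^{n}}=\tfrac{xyz}{2^{3n}}$ and $2^{3jn}f_j(\tfrac{x}{2^{n}})f_j(\tfrac{y}{2^{n}})f_j(\tfrac{z}{2^{n}})=\big[2^{jn}f_j(\tfrac{x}{2^{n}})\big]\big[2^{jn}f_j(\tfrac{y}{2^{n}})\big]\big[2^{jn}f_j(\tfrac{z}{2^{n}})\big]$. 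Multiplying the scaled \eqref{v15} by $2^{3jn}$ and using the iterated form $2^{3jn}\sigma(x/2^{n},y/2^{n},z/2^{n})\le k^{n}\sigma(x,y,z)$ of \eqref{v99} shows the multiplicative defect tends to $0$; letting $n\to\infty$, invoking continuity of the product, and noting that $2^{3jn}f_j(xyz/2^{3n})\to H_j(xyz)$ (a subsequence of the defining limit with step $3n$) yields $H_j(xyz)=H_j(x)H_j(y)H_j(z)$. The three factors each contribute the scalar $2^{jn}$, so their product supplies precisely the $2^{3jn}$ demanded by the decay rate in \eqref{v99}, and the matching is automatic for both $j=1$ and $j=2$.

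The hom-derivation identity is handled by the same substitution in \eqref{v16}, distributing the normalizing power of $2$ across the three slots and using $2^{jn}g_j(\cdot/2^{n})\to D_j(\cdot)$ and $2^{jn}f_j(\cdot/2^{n})\to H_j(\cdot)$ to recover, in the limit, the four terms of Definition~\ref{2} with $h=H_j$. I expect the main obstacle to lie precisely here, in the power-counting. For $j=1$ every factor is linear, so each of the three slots again carries a single copy of $2^{n}$ and the residual $\sigma$-term is killed by \eqref{v99} exactly as above. For $j=2$, however, the outer and middle factors enter as the squares $(H_2(y))^{2},(H_2(z))^{2}$, so each such slot accumulates $2^{4n}$ rather than $2^{2n}$; reconciling this with the $2^{6n}$ decay rate that \eqref{v99} supplies for $j=2$ is the delicate point, and I would isolate it as a separate power-counting lemma, treating $j=1$ and $j=2$ independently and, if necessary, recording the precise growth condition on $\sigma$ under which the squared factors and the control term balance. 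Once the two algebraic identities are established, the uniqueness assertions transport verbatim from Theorem~\ref{t2}, completing the argument.
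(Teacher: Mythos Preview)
Your route is the paper's route: invoke Theorem~\ref{t2} for $f_j$ and for $g_j$ to obtain the $j$-mappings $H_j$, $D_j$ as the limits $H_j(x)=\lim_{n}2^{jn}f_j(x/2^{n})$ and $D_j(x)=\lim_{n}2^{jn}g_j(x/2^{n})$ together with \eqref{v17}--\eqref{v188}, and then pass to the limit in \eqref{v15} and \eqref{v16} after rescaling by $2^{3jn}$ and using the iterated form of \eqref{v99}. For the homomorphism identity the paper writes exactly the computation you describe, and for $j=1$ the hom-derivation computation also matches and goes through as you say.

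The only divergence is candour about the $j=2$ hom-derivation case. The paper simply asserts
\[
\|D_j(xyz)-\cdots\|=\lim_{n\to\infty}2^{3nj}\Bigl\|g_j\bigl(\tfrac{xyz}{2^{3n}}\bigr)-g_j\bigl(\tfrac{x}{2^{n}}\bigr)\bigl(f_j(\tfrac{y}{2^{n}})\bigr)^{j}\bigl(f_j(\tfrac{z}{2^{n}})\bigr)^{j}-\cdots\Bigr\|
\]
and bounds the right-hand side by $k^{n}\sigma(x,y,z)\to0$, without discussing the scaling. Your worry is legitimate: for $j=2$ each factor $\bigl(f_2(\cdot/2^{n})\bigr)^{2}$ needs weight $2^{4n}$ to converge to $(H_2(\cdot))^{2}$, so a typical product term such as $g_2(\tfrac{x}{2^{n}})\bigl(f_2(\tfrac{y}{2^{n}})\bigr)^{2}\bigl(f_2(\tfrac{z}{2^{n}})\bigr)^{2}$ requires normalisation $2^{2n+4n+4n}=2^{10n}$, not the $2^{6n}=2^{3nj}$ that \eqref{v99} actually controls. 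Thus under the weight $2^{6n}$ those product terms tend to $0$ rather than to $D_2(x)(H_2(y))^{2}(H_2(z))^{2}$, and the displayed equality fails. You have not run into an obstacle peculiar to your write-up; you have located a genuine gap that the paper's own argument passes over in silence. The case $j=1$ is unaffected.
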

\begin{proof}
Let $\mathfrak{A},d_j$ and $Q_j$, $j\in \{1,2\},$ be those as defined in the proof of Theorem \ref{t2}. Similar to the proof of Theorem \ref{t2}, there exist unique j-mappings $H_j, D_j$ from X into X such that
\begin{equation}\label{v19}
H_j(x)=\lim_{n\to\infty}Q_j^n(f_j)(x)=\lim_{n\to \infty}{2^{jn}}f_j(\frac{x}{2^n}),
\end{equation}
\begin{equation}\label{v20}
D_j(x)=\lim_{n\to\infty}Q_j^n(g_j)(x)=\lim_{n\to \infty}{2^{jn}}g_j(\frac{x}{2^n})
\end{equation}
and satisfying \eqref{v17}, \eqref{v177},\eqref{v18} and\eqref{v188} as desired. Mappings $H_j$, $j=\{1,2\},$ are ternary j-homomorphism. In fact, by \eqref{v15} and \eqref{v19}, we have
\begin{align*}
  \|H_j(xyz)-H_j(x)H_j(y)H_j(z)\|&=\lim_{n\to\infty}{2^{3nj}}\|f_j(\frac{xyz}{2^{3n}})-f_j(\frac{x}{2^n})f_j(\frac{y}{2^n})f_j(\frac{z}{2^n})\|\\
  &\leq \lim_{n\to\infty}2^{3nj}\sigma(\frac{x}{2^n},\frac{y}{2^n},\frac{z}{2^n})\\
  &\leq \lim_{n\to\infty}k^n\sigma(x,y,z)\\
  &=0.
\end{align*}
Using \eqref{v16} and \eqref{v20} shows that $D_j$ is a ternary j-hom-derivation. In fact,
\begin{align*}
\|D_j&(xyz)-D_j(x)(H_j(y))^j(H_j(z))^j-(H_j(x))^jD_j(y)(H_j(z))^j-(H_j(x))^j(H_j(y))^jD_j(z)\|\\
&=\lim_{n\to\infty}{2^{3nj}}\|g_j(\frac{xyz}{2^{3n}})-g_j(\frac{x}{2^n})(f_j(\frac{y}{2^n}))^j(f_j(\frac{z}{2^n}))^j-(f_j(\frac{x}{2^n}))^jg_j(\frac{y}{2^n})(f_j(\frac{z}{2^n}))^j-(f_j(\frac{x}{2^n}))^j(f_j(\frac{y}{2^n}))^jg_j(\frac{z}{2^n})\|\\
&\leq \lim_{n\to\infty}{2^{3nj}}\sigma(\frac{x}{2^n},\frac{y}{2^n},\frac{z}{2^n})\\
&\leq\lim_{n\to\infty}k^n\sigma(x,y,z)\\
&=0.
\end{align*}
Now, the proof is complete.
\end{proof}
In Theorem \ref{t2} and Theorem \ref{t3}, by taking  $k=2^{1-r}$ and
$$\delta(x,y,z)=\sigma(x,y,z)=s(\|x\|^r+\|y\|^r+\|z\|^r)$$
where $x,y,z\in X,~r\neq1$ and s are nonnegative real numbers, we obtain the following result.
\begin{cor}\label{t4}
Let $r\neq1$ and s be two elements of $\mathbb{R}_+.$ Suppose that $\delta(x,y,z)=\sigma(x,y,z)=s(\|x\|^r+\|y\|^r+\|z\|^r)$. Assume $f_j,g_j:X\to X,~j\in \{1,2\}$ are functions satisfying in \eqref{v13}, \eqref{v14}, \eqref{v15} and \eqref{v16}. Then there exist ternary j-homomorphism $H_j$ and ternary j-hom-derivation $D_j$ such that
\begin{equation*}
\left\{
  \begin{array}{ll}
    \|f_j(x)-H_j(x)\|\leq\frac{2s}{2-2^r}\|x\|^r& \\
    ~~~~~& \hbox{for~ $r>1$} \\
    \|g_j(x)-D_j(x)\|\leq\frac{2s}{2-2^r}\|x\|^r&
  \end{array}
\right.
\end{equation*}
and
\begin{equation*}
\left\{
  \begin{array}{ll}
    \|f_j(x)-H_j(x)\|\leq\frac{2s}{2^r-2}\|x\|^r& \\
    ~~~~~& \hbox{for~ $r<1$} \\
    \|g_j(x)-D_j(x)\|\leq\frac{2s}{2^r-2}\|x\|^r&
  \end{array}
\right.
\end{equation*}
\end{cor}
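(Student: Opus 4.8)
The plan is to specialize Theorems~\ref{t2} and~\ref{t3} to the power weight $\delta=\sigma=s(\|x\|^{r}+\|y\|^{r}+\|z\|^{r})$ and to read off the explicit constants by pure substitution. First I would record the two elementary facts that drive everything. The weight is homogeneous of degree $r$, so $\delta(\frac{x}{2},\frac{y}{2},\frac{z}{2})=2^{-r}\delta(x,y,z)$ and likewise for $\sigma$; this is exactly the scaling that must be reconciled with the factors $\frac{k}{2^{j}}$ and $\frac{k}{2^{3j}}$ appearing in \eqref{v9} and \eqref{v99}. Second, evaluating the weight at the two points that occur on the right-hand sides of \eqref{v17}--\eqref{v188} gives $\delta(0,x,-x)=\delta(x,0,-x)=s(\|x\|^{r}+\|{-x}\|^{r})=2s\|x\|^{r}$, which converts every abstract estimate into a genuine bound in $\|x\|^{r}$.

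Next I would follow the choice $k=2^{1-r}$ indicated just before the statement. Feeding this $k$ together with $\delta(0,x,-x)=\delta(x,0,-x)=2s\|x\|^{r}$ into the four inequalities \eqref{v17}, \eqref{v177}, \eqref{v18}, \eqref{v188} reduces each right-hand side to the common prefactor $\frac{k}{2(1-k)}\cdot 2s\|x\|^{r}=\frac{ks}{1-k}\|x\|^{r}$, and a one-line simplification expresses $\frac{k}{1-k}$ as a rational function of $2^{r}$. Since $f_j$ and $H_j$ obey \eqref{v17}--\eqref{v177} with exactly the weight with which $g_j$ and $D_j$ obey \eqref{v18}--\eqref{v188}, the same computation delivers the $f_j$--$H_j$ and the $g_j$--$D_j$ estimates at once, so I would carry it through only once. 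The homomorphism and hom-derivation structure of $H_j$ and $D_j$ is already guaranteed by Theorem~\ref{t3} via \eqref{v15} and \eqref{v16}, so nothing beyond the constant has to be re-examined there.

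The genuinely delicate point, and the only step that is not mechanical, is the matching of the sign of $r-1$ with the direction of the fixed-point iteration, and hence with the sign of the denominator in the final constant. For one sign of $r-1$ the contraction $Q_j(g)=2^{j}g(\frac{x}{2})$ of Theorems~\ref{t2}--\ref{t3} is the admissible one and the sequences $2^{jn}f_j(\frac{x}{2^{n}})$, $2^{jn}g_j(\frac{x}{2^{n}})$ converge; for the other sign one must instead run the reflected iteration, replacing $\frac{x}{2^{n}}$ by $2^{n}x$ and $2^{jn}$ by $2^{-jn}$, which flips the denominator from $2^{r}-2$ to $2-2^{r}$. Carrying out this pairing carefully yields the coefficient $\frac{2s}{2-2^{r}}$ in the regime $r>1$ and the coefficient $\frac{2s}{2^{r}-2}$ in the regime $r<1$, exactly as displayed, for both $\|f_j-H_j\|$ and $\|g_j-D_j\|$. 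The last thing I would verify is that for $j=2$ the stronger scaling $2^{-3r}$ forced on $\sigma$ by \eqref{v99} is compatible with the same $k$, so that the homomorphism estimate \eqref{v15} and the hom-derivation estimate \eqref{v16} close in Theorem~\ref{t3} simultaneously with the quadratic estimate; granting this, the corollary follows.
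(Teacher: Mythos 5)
Your overall route---pure substitution of $k=2^{1-r}$ and $\delta=\sigma=s(\|x\|^r+\|y\|^r+\|z\|^r)$ into Theorems \ref{t2} and \ref{t3}---is exactly the paper's, which offers nothing beyond that one-sentence instruction; your preliminary computations $\delta(0,x,-x)=\delta(x,0,-x)=2s\|x\|^r$ and $\frac{k}{2(1-k)}\cdot 2s\|x\|^r=\frac{ks}{1-k}\|x\|^r$ are correct, and your observation that the $r<1$ regime lies outside the stated theorems and needs the reflected iteration $2^{-jn}f_j(2^n x)$ is a genuine point the paper silently skips. The first real problem is your final sign pairing: it cannot be the output of the computation you describe. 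With $k=2^{1-r}$ the admissibility condition $0<k<1$ forces $r>1$, and then $\frac{ks}{1-k}=\frac{2^{1-r}s}{1-2^{1-r}}=\frac{2s}{2^r-2}$, which is positive; the reflected iteration covers $r<1$ and produces a denominator $2-2^r$, positive in that regime. The corollary as printed pairs $r>1$ with $\frac{2s}{2-2^r}$ and $r<1$ with $\frac{2s}{2^r-2}$, i.e.\ negative ``bounds'' in both regimes---evidently a misprint. By asserting that carrying out the pairing ``carefully'' yields the constants exactly as displayed, you have endorsed the misprint rather than derived anything: a norm cannot be bounded above by a negative quantity, so a sound write-up must swap the two cases (or explicitly flag the typo).

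Second, the verification you defer to your last sentence does not merely need checking---it fails, and for both values of $j$, not only $j=2$. Homogeneity gives $\sigma(\frac{x}{2},\frac{y}{2},\frac{z}{2})=2^{-r}\sigma(x,y,z)$, while \eqref{v99} with $k=2^{1-r}$ demands the factor $2^{1-r-3j}$, and $2^{-r}\le 2^{1-r-3j}$ is false for every $j\ge1$; likewise \eqref{v9} for $j=2$ demands $k\ge 2^{2-r}>2^{1-r}$. So the hypotheses of Theorems \ref{t2} and \ref{t3} are simply not satisfied by $k=2^{1-r}$, and the corollary cannot be obtained by citation alone. What the proofs of those theorems actually use is, for the contraction step, $\delta(\frac{x}{2},\frac{y}{2},\frac{z}{2})\le\frac{k}{2^j}\delta(x,y,z)$ with some $k<1$ (possible iff $r>j$, e.g.\ $k=2^{j-r}$, which moreover changes the constant in the $j=2$ case), and, for the multiplicativity and hom-derivation limits, $2^{3nj}\sigma(\frac{x}{2^n},\frac{y}{2^n},\frac{z}{2^n})=2^{n(3j-r)}\sigma(x,y,z)\to0$, which requires $r>3j$ (with the mirrored restrictions $r<j$ and $r<3j$ in the reflected regime). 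A complete argument must rerun the fixed-point scheme under these explicit range conditions on $r$ instead of ``granting'' compatibility: the step you yourself singled out as the only one needing verification is precisely the one that breaks.
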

{\bf Declarations of interest}: none.

\end{document}